\theoremstyle{plain}
\newtheorem{theorem}{Theorem}
\newtheorem{proposition}[theorem]{Proposition}
\newtheorem{corollary}[theorem]{Corollary}
\newtheorem{lemma}[theorem]{Lemma}
\theoremstyle{definition}
\newtheorem{definition}[theorem]{Definition}
\def\ra{\rightarrow}
\def\ba{\begin{array}}
\def\ea{\end{array}}
\def\bi{\begin{itemize}}
\def\ei{\end{itemize}}
\def\mR{\mathbb{R}}
\def\mZ{\mathbb{Z}}
\def\mC{\mathbb{C}}
\def\m1{1}
\let\oldhat\hat
\renewcommand{\hat}[1]{\oldhat{\mathbf{#1}}}
\begin{document}
\title{Improved mixing rates of directed cycles by added connection}
\author{Bal\'azs Gerencs\'er\thanks{B. Gerencs\'er is with MTA
    Alfr\'ed R\'enyi Institute of Mathematics, Hungary,
    {\tt\small gerencser.balazs@renyi.mta.hu} He is supported by NKFIH
    (National Research, Development and Innovation Office)
    grant PD 121107.
    This work has been carried out during his stay at Universit\'e catholique de Louvain, Belgium.}%
  \and Julien M. Hendrickx\thanks{J. M. Hendrickx is with ICTEAM Institute,
    Universit\'e catholique de Louvain, Belgium,
    {\tt\small
      julien.hendrickx@uclouvain.be} 
    The work is supported by the DYSCO Network (Dynamical Systems,
    Control, and Optimization), funded by the Interuniversity
    Attraction Poles Programme, initiated by the  Belgian
    Federal Science Policy Office, and by the Concerted Research Action (ARC) of the
    French Community of Belgium.}
}
\date{\today}


\maketitle

\begin{abstract}
We investigate the mixing rate of a Markov chain where a combination of \emph{long
  distance edges} and \emph{non-reversibility} is introduced: as a
first step, we focus here on the following graphs: starting from the cycle
graph, we select random nodes and add all edges connecting them. We
prove a square factor improvement of the mixing rate compared to the
reversible version of the Markov chain.

\bigskip

\noindent \emph{Keywords:} mixing rate, random graph, non-reversibility.
\end{abstract}

\section{Introduction}

We study the mixing properties of certain Markov
chains which describe how fast the distribution of
the state approaches the stationary distribution regardless of the
initial conditions. The overall goal is to provide significant
improvement of mixing by minor modifications of the Markov chain.

Mixing time and rate are fundamental quantities in the study of Markov
chains \cite{levin:2009markov}, \cite{montenegro_tetali:slowmixing2006} and are the object of
active research; they are also highly relevant to applications
where mixing properties are strongly tied with performance
metrics. This is for example the case for Markov chain Monte Carlo
methods, which provide cheap approximations for integrals, and also
allow sampling from complex distributions that would otherwise be hard
to generate directly \cite{jerrum:mcmc1998}. Markov chains also provide a powerful scheme
for approximating the volumes of high dimensional convex bodies
\cite{lovasz2006hit}, \cite{lovasz2006simulated}. A different
application, average consensus, involves the
distributed computation of the average of initial values at different
agents in a multi-agent system (values which might correspond to
measurements, opinions, etc.) \cite{olsh_tsits:consensus_speed2009}. This can be achieved using a
Markov chain for which the stationary distribution is uniform. The
initial values can be viewed as a probability distribution scaled by a
constant, and the Markov chain will approach the uniform distribution
multiplied by the same constant, therefore the average of the initial
values will be present at each node. Efficient consensus (and average
consensus) approaches actually also play a role in several recent
distributed optimization algorithms \cite{nedic_olsh:optim_varigraph2015},
\cite{nedic_asu_pparillo:cons_optim}.
For all these applications good
performance is crucial and it is
determined by the dynamics of the underlying Markov chain. Mixing
properties are formulated exactly to answer such questions, which are
the topic of the current paper.

There are several ways of obtaining good mixing performance. In many
applications, the graph of possible transitions is determined by the
problem definition, but the specific transition probabilities can be
chosen. When these are required to satisfy some strong symmetry
properties (reversibility, described later in detail)
choosing those to optimize the mixing rate can be formulated as an
SDP problem \cite{boyd_and_al:fastmix2009},
\cite{boyd_and_al:fastmix2004}, which can be solved numerically using
standard methods. Departing from these symmetry properties brings
strong technical challenges, at the same time it can actually lead to significant improvement; the
mixing time can indeed drop to its square root in some cases
\cite{diaconis:nonrev2000}, \cite{montenegro_tetali:slowmixing2006}.
On the other hand, when it is possible to modify the graph of
possible transitions, astonishing speedup can also be obtained by
adding even a small number of randomly selected edges
\cite{addarioberry:swnmixing2012}, \cite{durrett:rgd},
\cite{krivelevich2013smoothed}.

Our long term goal is to study the speedup that can be achieved by a
combination of two a priori orthogonal transformations: (i) the
addition of a small number of random edges, and (ii) the introduction
of a strong non-reversibility.
We start with a cycle graph of $n$ nodes, select a lower number
$k$ of them to become hubs, then add extra edges between
the hubs. This scheme is motivated by one of the renown models to represent Small World
Networks, the Newman-Watts model \cite{durrett:rgd},
\cite{newman:swn2000}. The cycle presents a
natural way of including asymmetry by introducing a \emph{drift}
meaning increased clockwise transition probabilities along the cycle
and decreased counter-clockwise ones.
At this stage the model needs three parameters to be specified: the
placement of the hubs, the added interconnection structure on them,
and the asymmetry introduced along the cycle.

In this paper we consider as a first step a model where hubs are chosen randomly,
\emph{all edges between hubs are included} and asymmetry is taken to the
extreme: the Markov chain is a pure drift along the cycle taking
deterministic clockwise steps, except at the hubs. 
To better understand the dynamics of the process, observe that the
state of the Markov chain can be described by an arc (as the cycle
is split by the hubs) and the position within that arc. The main challenge
here is to show mixing happens both in term of arcs and in terms of
positions within. We prove that this model reaches a mixing rate of $\Omega(k/n)$ (up to $\log n$ factors) if $k=n^\sigma$.

In comparison, if we were to put pure drift along the cycle but with equidistant hubs, we
would have rapid mixing in term of the arcs (a perfect one after leaving
the first arc), but no mixing at all in term of the position on the
arc. Even by decreasing the drift or changing the interconnection
structure, the mixing rate will remain $O((k/n)^2)$ \cite{gb:ringmixing2014}.

Furthermore, if we were to stay with the classical, symmetrical
transitions along the cycle, the mixing rate will be again
$O((k/n)^2)$: for an arc at least $n/k$ long even the hitting time of
the ends of the arc from the middle is $\Omega((n/k)^2)$. 
This holds for any hub placement and interconnection structure.

After all, we want to emphasize that a speedup with a mixing rate of
$\Omega(k/n)$ is feasible only now that both random hubs and a drift
along the cycle are implemented, as opposed to only one of these.

The rest of the paper is organized as follows. In Section
\ref{sec:preliminaries} we formally describe the random graph model
and Markov chain on which we focus and a proxy graph model that we will use for the analysis.
In Section \ref{sec:randpol} we prove
the main mixing rate result for this proxy graph model. We then
translate our result to the primary graph model in Section
\ref{sec:mainmodel}. In Section \ref{sec:simulations} simulations are
presented complementing our asymptotic analytical results. We also
demonstrate how the mixing rate changes when the drift is decreased
for the model, suggesting that further performance improvements might
be possible. We draw conclusions and outline possible future
research directions in Section \ref{sec:conclusions}.
 
\section{Graph models, Markov chains and mixing rates}
\label{sec:preliminaries}

The concept of the graphs we consider is the following. We start with
a cycle with $n$ nodes, and randomly select a low number of vertices, $n^{\sigma}$ out
of the total of $n$ for some $0<\sigma<1$, which become hubs. Then we connect all hub nodes with
each other. Let us now present the precise definitions.

\begin{definition}
  \label{def:graphmodel_fixn}
  Given $n,k\in\mZ^+$ we define the random graph distribution $B_n(k)$
  as follows. Starting from a cycle graph on $n$ nodes, we randomly
  uniformly choose among the $k$ element subsets of edges and we
  remove the edges in the chosen subset.
  For the $i$th remaining arcs, $1\le i \le k$, we mark the clockwise
  endpoint as $a_i$ and the other end as $b_i$. Then we add all edges
  $(b_i,a_j)$, for all $1\le
  i,j\le k$. An example is given in Figure \ref{fig:randgraphexample}.
\end{definition}

\begin{figure}[h]
  \centering
  \includegraphics[width=0.4\textwidth]{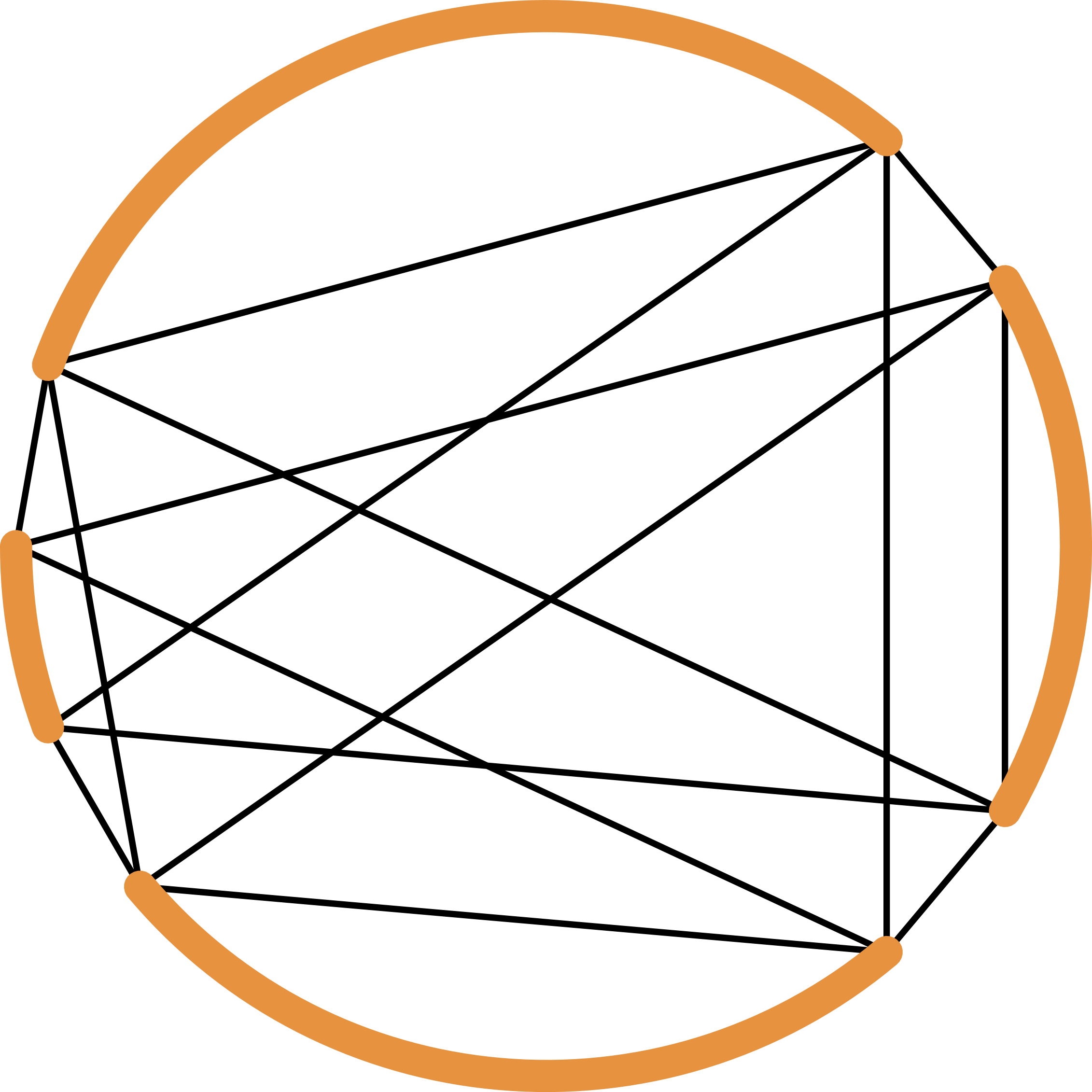}
  \caption{Example graph from $B_n(k)$ (see Definition \ref{def:graphmodel_fixn}).}
  \label{fig:randgraphexample}
\end{figure}

We are interested in the mixing behavior of Markov chains on these
graphs. A Markov chain is \emph{reversible} if for any edge $(u,v)$ of
the graph the probability of the $u\ra v$ transition is the same as the
$v\ra u$ transition. In this paper we go further from the comfortable
domain of reversible Markov chains, let us now introduce the ones
we will focus on.
\begin{definition}
  \label{def:puredriftMC}
  For any graph coming from $B_n(k)$ 
  we define the \emph{pure drift
    Markov chain} as follows. Within any arc we set transition
  probabilities to 1 along the arc all the way from $a_i$ to
  $b_i$. From any $b_i$, we set transition probabilities to $1/k$ on all edges
  towards all $a_j$. A part of such a chain is visualized in Figure
  \ref{fig:puredriftMCexample}.
\end{definition}

\begin{figure}[h]
  \centering
  \includegraphics[width=0.3\textwidth]{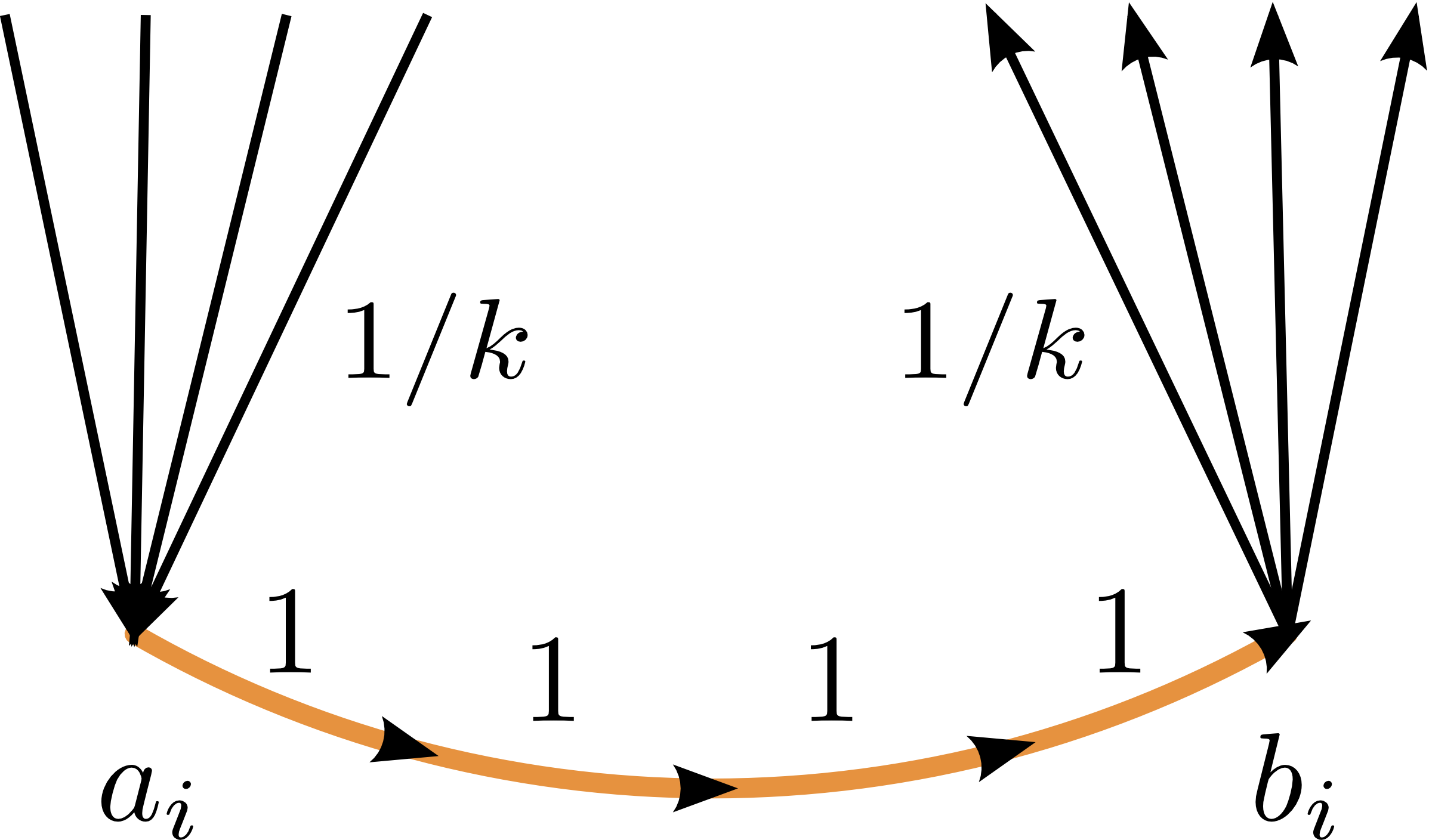}
  \caption{Example arc from the pure drift Markov chain (see
    Definition \ref{def:puredriftMC}).}
  \label{fig:puredriftMCexample}
\end{figure}

This is a Markov chain which has a doubly stochastic transition matrix,
therefore the stationary distribution is uniform. We
want to analyze the asymptotic rate as the distribution
approaches the stationary distribution. This mixing performance of the
Markov chain will be measured by the mixing rate:

\begin{definition}
  \label{def:mixingrate}
  For a Markov chain with transition matrix $P$ we define the
  \emph{mixing rate} $\lambda$ as
  $$\lambda = \min\left\{1-|\mu|~:~\mu\neq 1 \text{ is an eigenvalue of }
    P\right\}.$$
\end{definition}

Observe that for large numbers of nodes and comparably low number
of hubs, arc lengths approximately follow a geometric
distribution. However, they are not independently distributed, and
this approximation is not valid for large lengths.
Hence we introduce an alternative, simpler model of graphs reflecting approximately the same
concept but technically more convenient due to the added
independence. We will first establish our result for this alternative
model, and then extend it in Section \ref{sec:mainmodel} to the
original model of Definition \ref{def:graphmodel_fixn}.

\begin{definition}
  \label{def:graphmodel_indeparc}
  Given $L\in (1,\infty)$, $k\in\mZ^+$, we define the random graph
  distribution $B(L,k)$. Let us take independently $k$ random variables
  $$L_i \sim Geo(1/L),\qquad i=1,\ldots,k,$$
  where $Geo(p)$ denotes a geometric random distribution with
  parameter $p$ (and 1 as the smallest possible value).
  We begin with a graph that is the disjoint union of $k$ \emph{arcs},
  paths with $L_i$ nodes, each of which has a ``start point'' $a_i$ and an
  ``end point'' $b_i$, with $a_i=b_i$ if $L_i=1$. Then we add all edges $(b_i,a_j)$, for $1\le
  i,j\le k$.
\end{definition}

The extension of the pure drift Markov chain (Definition \ref{def:puredriftMC}) to this graph model is immediate.
Note that we chose to have $L_i$ denote the number of nodes as opposed
to the length of the path, because it leads to
simpler expressions in the technical developments.

\section{Random polynomials for pure drift Markov chains}
\label{sec:randpol}

In order to find the mixing rate of a Markov chain, we have to know
the eigenvalues of its transition matrix. For the current case we
transform this eigenvalue problem into finding the roots of a certain
polynomial.

\begin{proposition}
  \label{prp:polyrootequiv}
  Let us consider the pure drift Markov chain on a random graph from
  $B(L,k)$. Define also
  \begin{equation}
    \label{eq:qdef}
    q(z) = \sum_{i=1}^k z^{-L_i}.
  \end{equation}
  Then for $\mu\neq 0$, $\mu\in\mC$ is an eigenvalue of the transition matrix $P$ if and only if
  $q(\mu) = k.$
\end{proposition}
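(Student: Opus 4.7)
The plan is to unwind the eigenvalue equation $Pv=\mu v$ node-by-node, exploiting the deterministic structure within each arc to reduce the whole eigenvalue problem to a single scalar equation in $\mu$.

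First I would index the nodes of arc $i$ as $v_{i,0}=a_i,\,v_{i,1},\ldots,v_{i,L_i-1}=b_i$. Since the transition $v_{i,j}\to v_{i,j+1}$ is deterministic for $j<L_i-1$, the eigenvalue equation at such an interior node gives $v(v_{i,j+1})=\mu\,v(v_{i,j})$, which propagates to $v(v_{i,j})=\mu^{j}\alpha_i$ with $\alpha_i:=v(a_i)$. In particular $\beta_i:=v(b_i)=\mu^{L_i-1}\alpha_i$. So the eigenvector is completely determined by the $k$ scalars $\alpha_1,\ldots,\alpha_k$.

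Next, at each endpoint $b_i$ the transition is uniform over $\{a_1,\ldots,a_k\}$ with probability $1/k$, so the eigenvalue equation reads $\mu\,\beta_i=\frac{1}{k}\sum_{j}\alpha_j$. Substituting $\beta_i=\mu^{L_i-1}\alpha_i$ gives $k\mu^{L_i}\alpha_i=S$ where $S:=\sum_j\alpha_j$. Assuming $\mu\neq 0$ this forces $\alpha_i=(S/k)\mu^{-L_i}$, and summing over $i$ yields $S=(S/k)\sum_i\mu^{-L_i}=(S/k)\,q(\mu)$. Either $S=0$, in which case every $\alpha_i=0$ and $v\equiv 0$ (not an eigenvector), or $q(\mu)=k$, giving the forward implication.

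For the converse I would exhibit an eigenvector explicitly: given $\mu\neq 0$ with $q(\mu)=k$, set $\alpha_i=\mu^{-L_i}$ and $v(v_{i,j})=\mu^{j}\alpha_i$; then $\sum_j\alpha_j=q(\mu)=k$, and the endpoint identity $\mu\beta_i=1=\frac{1}{k}\sum_j\alpha_j$ is checked by a one-line calculation, while the interior identities hold by construction.

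No serious obstacle is expected: the whole argument is propagation along arcs followed by one scalar identification. The only mild subtlety is the $\mu=0$ case, where $q$ has a pole and the statement must be read with $\mu\neq 0$ understood; since mixing is governed by the non-zero eigenvalues nearest the unit circle, this exception is immaterial to the subsequent analysis.
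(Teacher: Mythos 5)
Your proof is correct and follows essentially the same approach as the paper's: propagate the eigenvector deterministically along each arc, then reduce the eigenvalue condition to the single scalar equation $q(\mu)=k$ at the junction. The only cosmetic difference is that you work with the right eigenvector $Pv=\mu v$ (getting varying $\alpha_i=v(a_i)$ and equal $\beta_i$'s) whereas the paper works with the left eigenvector $xP=\mu x$ (getting equal $x_{a_i}$'s and varying $x_{b_i}$'s), and you explicitly flag the harmless $\mu=0$ exclusion that the paper leaves implicit.
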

\begin{proof}
  Assuming $\mu$ is the eigenvalue of the transition matrix $P$ let us
  find the corresponding eigenvector $x$. Observe that each
  $a_i$ has incoming edges from exactly the same nodes and the same
  weights, so the eigenvector must take the same value $x_{a_i}=h$ at
  each of them for some $h$.

  For two subsequent nodes $p$ and $p^+$ along an arc, from $xP=\mu x$ we get
  $$x_p = \mu x_{p^+}.$$
  This implies that along the arcs we see the values
  $h,h\mu^{-1},\ldots,h\mu^{-L_i+1}$ (and thus just $h$ in case of a single node arc).
   This already completely
  determines $x$ up to scaling and ensures the eigenvalue equation for all nodes
  except the $a_i$. We get a valid eigenvector if the equation also
  holds for $a_i$, which takes the form
  $$\sum_{i=1}^k \frac{1}{k} h \mu^{-L_i+1} = \mu h.$$
  Having a non-zero eigenvector implies $h\neq 0$. Therefore the
  above equation is equivalent to $q(\mu) = k$ if $\mu\neq 0$.

  For the other direction, given a
  $\mu\neq 0$ such that $q(\mu) = k$, we can again build $x$ by setting $1,
  \mu^{-1},\ldots,\mu^{-L_i+1}$ on each arc, and this
  will clearly be an eigenvector of $P$ with
  eigenvalue $\mu$.
\end{proof}

By Definition \ref{def:mixingrate}, the mixing
rate is high if the transition matrix $P$ has no eigenvalue near
the complex unit circle. Therefore to get a lower bound on the mixing
rate we have to exclude a ring shaped domain for the eigenvalues.
The region to be avoided is
\begin{equation}
  \label{eq:Rgammadef}
  R_\gamma = \left\{z: 1 - \frac{1}{L\log^\gamma k} \le |z| \le 1,~
  z\neq 1 \right\},  
\end{equation}
where $\gamma$ is a constant parameter to be chosen later. We will show that
asymptotically almost surely (a.a.s.) no eigenvalue of $P$ falls in $R_\gamma$.
The width of the ring should be viewed as follows. We assume $L$ and $k$ are of similar magnitudes meaning that they
have a polynomial growth rate w.r.t.\ each other. Therefore the width
is at most a logarithmic factor lower than $1/L$.
Our key result is the following:

\begin{theorem}
  \label{thm:noroot}
  Assume $k,L \ra \infty$ while $\rho_l < \log L/\log k < \rho_u$ for some
  constants $0<\rho_l<\rho_u<\infty$, and fix $\gamma > 4$. We use the graph
  model $B(L,k)$ and the definition of $q(z)$ from \eqref{eq:qdef} and $R_\gamma$
  from \eqref{eq:Rgammadef}. Then for any $c,d>0$ we
  have
  $$P \left(\exists z \in R_\gamma, ~q(z)=k \right)  =
  O(k^{-c}L^{-d}).$$
  Consequently, in view of Proposition \ref{prp:polyrootequiv} and the definition of $R_\gamma$, we obtain for the mixing rate $\lambda > 1/(L\log^\gamma k)$ a.a.s.
\end{theorem}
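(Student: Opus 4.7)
By Proposition~\ref{prp:polyrootequiv}, an eigenvalue $\mu \neq 1$ of $P$ solves $q(\mu) = k$, so the mixing-rate statement follows from the root-absence bound; it thus suffices to show $q(z) \neq k$ throughout $R_\gamma$ with probability $1 - O(k^{-c}L^{-d})$. The key observation is that the probability generating function of $Geo(1/L)$ yields $\mE[z^{-L_i}] = 1/(1 + L(z-1))$, and hence $\mE[q(z)] = k/(1 + L(z-1))$; this mean is close to $k$ precisely when $z$ is close to $1$. I would work throughout on a high-probability event $\mathcal{E}$ on which both $\sum_i L_i \in [kL/2,\,3kL/2]$ and $\max_i L_i \leq CL\log k$ hold: both bounds follow from standard Chernoff-type tail estimates for geometric variables, and $\mathcal{E}^c$ has super-polynomially small probability in $k$, well inside the target. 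On $\mathcal{E}$, and using $\gamma > 1$, the summands satisfy $|z^{-L_i}| \leq \exp(L_i/(L\log^\gamma k)) = O(1)$ uniformly on $R_\gamma$.

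Set $\zeta := z - 1$ and split $R_\gamma$ into a \emph{near} region $R_B = \{z \in R_\gamma : 0 < |\zeta| < \zeta_0\}$ and a \emph{far} region $R_A = R_\gamma \setminus R_B$, with threshold $\zeta_0$ of order $\log^2 k /(L\sqrt{k})$. On $R_B$ I would Taylor-expand
$$q(z) - k \;=\; -\zeta \sum_i L_i \;+\; \sum_{j \geq 2}(-\zeta)^j \sum_i \binom{L_i + j - 1}{j},$$
and control the tail term-by-term via the negative binomial identity $\sum_{j\geq 2}|\zeta|^j \binom{L_i+j-1}{j} = (1-|\zeta|)^{-L_i} - 1 - L_i|\zeta| = O(L_i^2 |\zeta|^2)$, valid because $|\zeta| L_i \ll 1$ on $\mathcal{E}$. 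This gives a remainder of order $|\zeta|^2 \sum_i L_i^2 = O(|\zeta|^2 k L^2 \log k)$. Since the linear term has size $|\zeta \sum_i L_i| \geq |\zeta| kL/2$ and $\zeta_0 \ll 1/(L\log k)$, the linear term strictly dominates, so $q(z) \neq k$ holds deterministically on $R_B$ once $\mathcal{E}$ is secured.

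On $R_A$ I would combine a covering net with Hoeffding's inequality. Splitting into cases on whether $|L\zeta| \leq 1$ or $|L\zeta| > 1$ in $|\mE[q(z)] - k| = k|L\zeta|/|1 + L\zeta|$ yields $|\mE[q(z)] - k| \geq (\sqrt{k}\log^2 k)/2$ uniformly on $R_A$. Since $q(z) - \mE[q(z)]$ is a sum of $k$ bounded, centred, independent complex random variables, Hoeffding gives $P(|q(z) - \mE[q(z)]| \geq (\sqrt{k}\log^2 k)/4) \leq \exp(-c\log^4 k)$ at each fixed $z$. Placing a net of mesh $\delta \sim \log^2 k /(L\sqrt{k})$ on $R_A$ and using the pointwise Lipschitz bound $|q'(z)| \leq \sum_i L_i |z|^{-L_i-1} = O(kL)$ on $\mathcal{E}$ propagates the conclusion from net points to all of $R_A$. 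The net has $O(kL/\log^{\gamma+4}k)$ points, so a union bound against the super-polynomial decay $\exp(-c\log^4 k)$ closes the argument.

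The main obstacle is $R_B$: there $\mE[q(z)] \approx k$ renders concentration useless, so the argument must succeed deterministically once $\mathcal{E}$ is in hand, and this is where the tail bound $\max_i L_i \leq CL\log k$ becomes essential because the Taylor remainder involves $\sum_i L_i^2$. The threshold $\zeta_0$ has to be tuned carefully: small enough for the linear term to dominate the quadratic remainder on $R_B$, yet large enough on $R_A$ for $|\mE[q(z)] - k|$ to overpower the $\sqrt{k}$-scale Hoeffding fluctuations; the hypothesis $\gamma > 4$ enters through the interplay of the ring width $1/(L\log^\gamma k)$, the net mesh $\delta$, and these log-power thresholds.
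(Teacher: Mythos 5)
Your proposal is a genuinely different proof of Theorem~\ref{thm:noroot} and, up to some technical details, it is sound.  The paper partitions $R_\gamma$ by the \emph{argument} of $z$ (positive reals handled by monotonicity, small arguments by a sign argument on $\Im q$, large arguments via Proposition~\ref{prp:cosloss}, which controls the empirical distribution of $\{L_i\arg z\bmod 2\pi\}$ by an elementary block-comparison argument).  You instead partition by the \emph{distance} $|z-1|$ and exploit the closed form $\mE[z^{-L_1}]=1/(1+L(z-1))$: near $z=1$ you show deterministically (on the event $\mathcal{E}$) that the linear Taylor term $-\zeta\sum_i L_i$ dominates the remainder $O(|\zeta|^2\sum_i L_i^2)$, and away from $z=1$ you show $|\mE q(z)-k|\gtrsim\sqrt{k}\log^2 k$ and beat this with Hoeffding plus a covering net.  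Both strategies close; yours is more ``analytic'' (p.g.f.\ computation, concentration, chaining) where the paper's is more ``arithmetic'' (equidistribution-like estimates for $\{L_i x\}$).  One payoff of your route: it does not actually use $\gamma>4$ anywhere that I can see—the boundedness of $|z|^{-L_i}$ on $\mathcal{E}$ only needs $\gamma>1$, and the net/union-bound terms only improve as $\gamma$ grows—so you would be proving a strictly stronger statement than the theorem; your closing sentence attributing a role to $\gamma>4$ does not reflect what your argument requires, whereas the paper genuinely needs $\gamma>4$ (it must dominate $\alpha+2\beta+1$ with $\alpha,\beta>1$ from Proposition~\ref{prp:cosloss}).

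The one genuine gap to fix is in the far region.  You invoke Hoeffding for ``$k$ bounded, centred, independent'' summands, but $z^{-L_i}$ is \emph{not} almost surely bounded; it is bounded only on $\mathcal{E}$, and conditioning on $\mathcal{E}$ destroys both the independence and the identity $\mE[z^{-L_i}]=1/(1+L\zeta)$.  The standard repair is truncation: set $\tilde L_i=\min(L_i,CL\log k)$ and $\tilde q(z)=\sum_i z^{-\tilde L_i}$; these summands are bounded, independent, and $\tilde q\equiv q$ on $\mathcal{E}$, so
$P(\exists z\in R_A:\,q(z)=k)\le P(\mathcal{E}^c)+P(\exists z\in R_A:\,\tilde q(z)=k)$.
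You then need to verify $|\mE[\tilde q(z)]-\mE[q(z)]|=O(k^{1-C})$ uniformly on $R_\gamma$ (a geometric-tail computation, using $\gamma>1$ to keep $|z|^{-j}$ from overtaking $(1-1/L)^{j}$), which is negligible against $\sqrt{k}\log^2 k$ for $C$ large.  Two smaller points worth tightening: (i) $P(\mathcal{E}^c)$ is not super-polynomial as written—$P(\max_i L_i\ge CL\log k)=O(k^{1-C})$ is only polynomial, and $C$ must be chosen after $c,d$ to hit the advertised $O(k^{-c}L^{-d})$; and (ii) the net mesh and the Hoeffding deviation level are both of size $\Theta(\sqrt{k}\log^2 k/(kL))$ and $\Theta(\sqrt{k}\log^2 k)$ respectively, so a bit of constant bookkeeping (shrink the mesh by a fixed factor, or tighten the Hoeffding threshold to $\sqrt{k}\log^2 k/8$) is needed to ensure the Lipschitz propagation does not eat the whole margin.
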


We show the claim in four steps. First we ensure that we can assume
the $L_i$ variables to be bounded with high probability, this will
make further estimates possible.
We then check $z$ coming from
different parts of $R_\gamma$: we start with positive reals,
then we treat complex numbers in two different ways depending on their arguments.

Intuitively the reason is the following. For some real
$0 < z < 1$, $q(z)$ will be too large. Next, take $z$ with low
arguments, now all $z^{-L_i}$ will be in the same half-plane
resulting in a non-zero imaginary part for $q(z)$. When the argument is far enough
such that $q(z)$ has a chance to have zero imaginary part again, the
$z^{-L_i}$ will point in various different directions so that the
cancellations will force the real part below $k$, a.a.s.

Now let us make all this precise. We will confirm that each of the
intuitive steps above work with high probability, with the fourth requiring the $L_i$
to be different enough and not being extremely large. We then join these steps to give a proof of the theorem.

The probabilistic upper bound we need on the $L_i$ can be formulated in the
following way:

  \begin{lemma}
    \label{lm:lupperbound}
    For any $C>1,~L\ge 2$, there holds
    $$
    P(\max_i L_i \ge CL \log k) = O\left(k^{1-C}\right).
    $$
  \end{lemma}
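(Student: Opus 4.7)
The plan is to use the standard geometric tail bound and then apply a union bound over the $k$ variables. Since the $L_i$ are i.i.d.\ with $L_i\sim Geo(1/L)$, the survival function is explicit: for any integer $m\ge 1$,
\begin{equation*}
P(L_i \ge m) = \left(1-\frac{1}{L}\right)^{m-1}.
\end{equation*}
So the whole statement reduces to showing that a single $L_i$ exceeds $CL\log k$ with probability $O(k^{-C})$.

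First I would substitute $m = \lceil CL\log k\rceil$ into the above and use the elementary inequality $(1-1/L)^L \le e^{-1}$, valid for every $L\ge 1$ (the left hand side is increasing in $L$ with limit $1/e$). This gives
\begin{equation*}
P(L_i \ge CL\log k) \le \left(1-\tfrac{1}{L}\right)^{CL\log k - 1} \le e^{-C\log k}\cdot\left(1-\tfrac{1}{L}\right)^{-1} = k^{-C}\cdot\frac{L}{L-1},
\end{equation*}
and the factor $L/(L-1)$ is bounded by $2$ for all $L\ge 2$, hence is $O(1)$ under our asymptotic regime.

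Second, I would apply the union bound over $i=1,\ldots,k$:
\begin{equation*}
P(\max_i L_i \ge CL\log k) \le \sum_{i=1}^k P(L_i \ge CL\log k) = O(k\cdot k^{-C}) = O(k^{1-C}),
\end{equation*}
which is exactly the stated bound.

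There is essentially no obstacle here; the only mild care needed is in the handling of the ``$-1$'' in the exponent and the constant prefactor $L/(L-1)$, both of which get absorbed into the $O(\cdot)$ since $L\to\infty$. The lemma is purely a concentration statement for the maximum of i.i.d.\ geometric variables, and it is the union bound combined with the exponential tail that makes the probability decay like $k^{1-C}$ for any $C>1$.
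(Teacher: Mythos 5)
Your proof is correct and follows essentially the same route as the paper: exact geometric tail, the bound $(1-1/L)^L\le e^{-1}$, and a union bound over the $k$ variables. You are in fact slightly more careful than the paper in tracking the ``$-1$'' in the exponent of the survival function (the paper writes $P(L_i\ge m)=(1-1/L)^{m}$ rather than $(1-1/L)^{m-1}$), but the resulting $L/(L-1)$ factor is harmless and absorbed into the $O(\cdot)$, so the conclusion is the same.
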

  \begin{proof}

    Assume $k\ge 2$. Remember that $L_i$ are i.i.d.\ variables with law
    $Geo(1/L)$. Therefore we have that
    $$P(L_i \ge CL \log k) = P(L_i \ge \lceil CL \log k \rceil)
    = \left(1-\frac{1}{L}\right)^{\lceil CL \log k\rceil-1}
    \le 2\left(1-\frac{1}{L}\right)^{CL \log k},$$  
    based on $\left(1-\frac{1}{L}\right)^{\lceil CL \log k\rceil}
      \le \left(1-\frac{1}{L}\right)^{CL \log k}$ and
      $\left(1-\frac{1}{L}\right)^{-1}<2$.
    Knowing $(1-1/L)^L < 1/e$ we get
    $$P(L_i \ge CL \log k) \le 2e^{-C\log k} = 2k^{-C}.$$
    To treat all $L_i$ together for $1\le i \le k$, we us a simple union bound.
    $$P(\max_i L_i \ge CL \log k) \le 2k^{1-C}.$$
  \end{proof}
  
  From now on, we will only investigate the 
  (a.a.s.) event that the
  maximal $L_i$ is small as shown in Lemma \ref{lm:lupperbound}. Let
  us call this event $S(C)$. We now check $z$ coming from different parts of
  $R_\gamma$. The simplest case is when $z$ is a positive real:

  \begin{lemma}
    \label{lm:zreal}
    Assume $z\in (0,1)$. Then $q(z)\neq k$.
  \end{lemma}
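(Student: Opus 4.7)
The plan is to observe that every term in $q(z) = \sum_{i=1}^k z^{-L_i}$ is strictly larger than $1$ when $z \in (0,1)$, which forces $q(z) > k$.

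More precisely, I would first recall that by Definition \ref{def:graphmodel_indeparc} the arc lengths $L_i$ are drawn from $Geo(1/L)$, which by the convention stated there takes values in $\{1,2,3,\ldots\}$; in particular $L_i \ge 1$ for every $i$. Next, for any $z \in (0,1)$, the map $t \mapsto z^{-t}$ is strictly increasing on $[0,\infty)$ and equals $1$ at $t=0$, so $z^{-L_i} \ge z^{-1} > 1$ for every $i$. Summing over $i=1,\ldots,k$ yields
\[
q(z) = \sum_{i=1}^k z^{-L_i} \ge k z^{-1} > k,
\]
which in particular gives $q(z) \neq k$, as claimed.

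There is essentially no obstacle here: the lemma is a one-line sign/positivity argument, and it serves only to dispense with the real-positive slice of the annulus $R_\gamma$ before the genuinely hard complex-argument cases (low argument, where the imaginary part of $q(z)$ cannot vanish; and larger argument, where cancellations must be controlled probabilistically) are tackled in the subsequent lemmas.
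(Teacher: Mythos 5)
Your argument is correct and is essentially identical to the paper's one-line proof: each term $z^{-L_i}$ exceeds $1$ because $L_i \ge 1$ and $z\in(0,1)$, so the sum strictly exceeds $k$. You merely spell out the monotonicity of $t\mapsto z^{-t}$ and the $L_i\ge 1$ convention that the paper leaves implicit.
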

  \begin{proof}
    For such $z$, 
    $q(z)$ is composed of $k$ positive terms $z^{-L_i}$, each of
    them being larger than 1 because all $L_i$ are positive (Remember
    that the smallest possible value for the numbers of nodes $L_i$ is
    1). Consequently the sum of the $z^{-L_i}$ is higher than $k$. 
  \end{proof}

  Next we show that there is no $z\in R_\gamma$ with small arguments
  for which $q(z)=k$.

  \begin{lemma}
    \label{lm:qimag}
    Assume $S(C)$ and take $z\in R_\gamma$ such that $0 \neq |\arg(z)| <
    \pi/(CL \log k)$. Then $\Im q(z) \neq 0$. Consequently $q(z)\neq k$.
  \end{lemma}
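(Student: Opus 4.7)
The plan is to argue geometrically: after writing $z = r e^{i\theta}$, the individual terms $z^{-L_i}$ of $q(z)$ all lie in a common open half-plane, so their sum cannot be real.

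More concretely, I would first set $z = r e^{i\theta}$ with $r \in [1 - 1/(L\log^\gamma k),1]$ and, without loss of generality, $0 < \theta < \pi/(CL \log k)$ (the case $\theta < 0$ is symmetric, e.g.\ by conjugation). Then each term of $q(z)$ has the form
\[
z^{-L_i} = r^{-L_i}\,e^{-i L_i \theta},
\]
so its argument is $-L_i \theta$. The next step is to bound $L_i \theta$: under the event $S(C)$ guaranteed by Lemma \ref{lm:lupperbound}, every $L_i$ satisfies $L_i < CL \log k$, hence
\[
0 < L_i \theta < CL \log k \cdot \frac{\pi}{CL \log k} = \pi,
\]
using also $L_i \geq 1$ and $\theta > 0$ for the strict lower bound.

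Therefore each $-L_i \theta$ lies strictly in the open interval $(-\pi, 0)$, which means every summand $z^{-L_i}$ has a strictly negative imaginary part. Summing $k$ such terms preserves strict negativity, so $\Im q(z) < 0$; in particular $q(z)$ cannot equal the real number $k$.

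There is essentially no obstacle here beyond carefully tracking strict inequalities; the real content is already packaged in Lemma \ref{lm:lupperbound} (which guarantees, with overwhelming probability, the uniform bound on the $L_i$ that keeps all rotations $-L_i\theta$ inside a single open half-plane). The only thing to double-check is that both strict inequalities $|L_i\theta| > 0$ and $|L_i\theta| < \pi$ hold simultaneously, which follows from $L_i \geq 1$, $\theta \neq 0$, and the assumed bound on $|\arg(z)|$ together with $S(C)$.
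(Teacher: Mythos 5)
Your proof is correct and is essentially identical to the paper's argument: both reduce by symmetry to $\arg(z)>0$, use $S(C)$ to bound $L_i < CL\log k$ so that every $-L_i\arg(z)$ lies strictly in $(-\pi,0)$, and conclude that all summands of $q(z)$ have strictly negative imaginary part. No differences worth noting.
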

  \begin{proof}
    Without loss of generality, assume $\arg(z)>0$. The event
    $S(C)$ ensures $0< L_i < CL \log k$, so the $z^{-L_i}$ will all be in the
    same half-plane, as they will have
    an argument in $(-\pi, 0)$. For all these values, the imaginary
    part is negative, and the same holds thus true for their sum. Therefore it simply cannot be 0.
  \end{proof}

  It remains to check the elements of $R_\gamma$ whose argument is
  ``large''. 
  The arguments in question are those in
  \begin{equation}
    \label{eq:Adef}
    A := \left[ \frac{\pi}{CL \log k}, 2\pi - \frac{\pi}{CL \log
        k} \right].
  \end{equation}
  We argue that the arguments of $z^{-L_i}$ become so different that
  strong cancellations will happen. 
  We now formalize this idea in terms of the sum of cosines of the
  arguments $L_ix$. Note that the proposition statement uses
  $\cos^+(y) =\max(\cos(y),0)$ instead of simple cosines because we
  will need to take the sums of $\cos^+$
  scaled by different magnitudes $\left|z^{-L_i}\right|$ in \eqref{eq:req_estimate1}.

  \begin{proposition}
    \label{prp:cosloss}
    Choose constants $\alpha,\beta > 1$ and also $\rho_l,\rho_u$ as
    in Theorem \ref{thm:noroot}, and require
    $\rho_l < \log L/\log k < \rho_u$, where  $k,L,L_i$, defined in Definition
    \ref{def:graphmodel_indeparc}, are the number of arcs, the expected arc length, and the actual lengths of the arcs.
    We define
    $$m = \lceil \log^\alpha k \rceil, \qquad \delta = \log^{-\beta} k.$$
    Then for $k,L$ large enough we have, 
        $$P\left(\sup_{x\in A}\sum_{i=1}^m \cos^+(L_i  x) < m - \delta^2\right)
    \ge \frac{1}{3},$$
    where $\cos^+(y) = \max(\cos(y),0).$
  \end{proposition}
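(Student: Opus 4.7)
The plan is to set $Y_i(x) := 1 - \cos^+(L_i x) \in [0,1]$ and show $\inf_{x \in A}\sum_{i=1}^m Y_i(x) > \delta^2$ with probability at least $1/3$. I would proceed in three steps: a uniform lower bound on $E[Y_1(x)]$ for $x \in A$, a pointwise concentration estimate, and a Lipschitz/discretization argument to upgrade the pointwise bound to a uniform one over $A$.

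For the first step, the elementary inequality $1 - \cos^+(y) \ge \sin^2(y/2)$ together with the closed form of the probability generating function of $\mathrm{Geo}(1/L)$ yield
\[ E[\sin^2(L_1 x/2)] = \frac{(2-1/L)\sin^2(x/2)}{1/L^2 + 4(1-1/L)\sin^2(x/2)}.\]
The defining constraint $Lx \ge \pi/(C\log k)$ on $A$ forces this ratio to be $\gtrsim (Lx)^2/(1+(Lx)^2) \gtrsim 1/\log^2 k$ uniformly, with the infimum attained near the endpoints of $A$ and $E[Y_1(x)] = \Theta(1)$ in the interior. For the second step, since the $Y_i$ are i.i.d.\ with $Y_i \in [0,1]$ and $\mathrm{Var}(Y_i) \le E[Y_i]$, Bernstein's inequality gives, at fixed $x$,
\[ P\Bigl(\textstyle\sum_i Y_i(x) \le \tfrac12 m E[Y_1(x)]\Bigr) \le \exp\bigl(-c\,m E[Y_1(x)]\bigr),\]
which decays like $\exp(-\Omega(\log^{\alpha-2} k))$ near the endpoints of $A$ and like $\exp(-\Omega(\log^\alpha k))$ in the interior; since $\tfrac12 m E[Y_1(x)] \gg \delta^2$ provided $\alpha + 2\beta > 2$, this pointwise estimate is stronger than what we need.

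For the third step, $f(x) := \sum_i \cos^+(L_i x)$ is Lipschitz with constant $\sum_i L_i$; by Markov, this is at most $10mL$ on an event of probability $\ge 9/10$. Splitting $A$ into the ``small-$x$'' window $[\pi/(CL\log k), \pi/L]$ of length $O(1/L)$ and its ``large-$x$'' complement, a grid of spacing $\delta^2/(20mL)$ produces $O(\log^{\alpha+2\beta} k)$ points in the small-$x$ window and $O(L\log^{\alpha+2\beta} k)$ points in the large-$x$ window, with uniform discretization error at most $\delta^2/2$. A union bound over both grids combines with the pointwise Bernstein estimates to close the argument. The main obstacle is the small-$x$ window, where the decay $\exp(-\Omega(\log^{\alpha-2} k))$ has to defeat a polylogarithmic grid count, forcing $\alpha$ to be large enough relative to $\beta$; in the large-$x$ window the much stronger decay $\exp(-\Omega(\log^\alpha k))$ absorbs the polynomial-in-$L$ grid count for any $\alpha > 1$. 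The modest ``$1/3$'' threshold obtained here will be amplified to a high-probability statement in the proof of Theorem~\ref{thm:noroot} by splitting the $k$ independent arcs into $k/m$ disjoint groups of size $m$ and applying the proposition to each.
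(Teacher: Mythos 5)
Your approach diverges from the paper's at the pointwise concentration step, and this is where a genuine gap appears. The paper does not estimate the mean of $Y_i(x) = 1 - \cos^+(L_i x)$. Instead, its Lemma~\ref{lm:indprob} is an \emph{anticoncentration} statement for the residue $\{L_1 x\}$: for every fixed $x \in A$, with probability at least $1/4$ the point $\{L_1 x\}$ lands outside any prescribed arc of length $6\delta$, in particular outside $[-3\delta,3\delta]$, which forces $Y_1(x) \ge 2\delta^2$. This gives a pointwise failure probability of order $(3/4)^m = \exp(-\Omega(\log^\alpha k))$ that does \emph{not} degrade as $x$ approaches the endpoints of $A$. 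Your route instead computes $E[Y_1(x)]$ via the generating function (the computation is correct, as is the inequality $1-\cos^+(y)\ge\sin^2(y/2)$) and then applies Bernstein. But near the endpoint $x \approx \pi/(CL\log k)$ you only have $E[Y_1(x)] \asymp 1/\log^2 k$, so Bernstein yields $\exp(-\Omega(m\,E[Y_1])) = \exp(-\Omega(\log^{\alpha-2}k))$. For $1 < \alpha < 2$ this tends to $1$, not $0$, and for $\alpha = 2$ it is a constant, so the union bound over the small-$x$ grid cannot close. You acknowledge this (``forcing $\alpha$ to be large enough''), but the proposition is stated for every $\alpha>1$, and the proof of Theorem~\ref{thm:noroot} sends $\alpha,\beta\downarrow 1$ to obtain $\gamma>4$; restricting to $\alpha>2$ would weaken the final exponent.

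The underlying reason first-moment information is insufficient is that a small mean is compatible with $Y_1$ being exactly $0$ with probability close to $1$, in which case no Bernstein-type bound can give a strong tail. The paper's block-comparison argument is precisely what rules this out: it bounds a quantile of $Y_1$, not its mean, showing that $\{L_1 x\}$ is spread over the circle even when the drift $x$ is tiny. To repair a mean-based approach you would need to import some such distributional statement anyway, at which point you have essentially recovered Lemma~\ref{lm:indprob}. The discretization step you propose is in the same spirit as the paper's (the paper uses a uniform cutoff $L_i \le CL\log k$ from Lemma~\ref{lm:lupperbound}; you use Markov on $\sum_i L_i$; both give an adequate Lipschitz constant), so that part is fine.
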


  \begin{proof}
  
    Broadly speaking we will show that at least one of the $L_i x$ terms will be far
    from $2k\pi$, which should decrease enough the sum of the
    cosines. For a single $x$, we state the following lemma.
  \begin{lemma}
    \label{lm:indprob}
    Use $k,L,L_i$ as in Definition \ref{def:graphmodel_indeparc}.
    Fix $\beta >1$, $x\in A$ (as defined in \eqref{eq:Adef}) and
    choose an arbitrary
      modulo $2\pi$ interval $D\subset [0,2\pi]$ of size
      $|D|=6\delta=6\log^{-\beta} k$. 
    Then for $k,L$ large enough we have
    $$P(\{L_1 x\} \in D) \le \frac{2}{3},$$
    where $\{a\}$ stands for $a \mod 2\pi$.
  \end{lemma}
  \begin{proof}
    Each element of the series $\{x\},\{2x\},\{3x\},\ldots$ is
    either in $D$ or not. We can therefore split the series into
    blocks that are in $D$ and to blocks that are not. Let
    $t_1$ be the first coefficient such that $\{t_1 x\}\in D$, then we
    can define the blocks
    \begin{align*}
      &\{t_i x\}, \{(t_i+1) x \}, \ldots, \{ (s_i-1)x \} \in D,\\
      &\{s_i x\}, \{(s_i+1) x \}, \ldots, \{ (t_{i+1}-1)x \} \notin D,
    \end{align*}
    with $s_i\geq t_i+1$ and $t_{i+1}\geq s_i+1$. Observe that 
    \begin{align} \label{eq:det_proba}
      P(\{L_1 x\} \in D) &= \sum_{i\geq 1} P(L_1 \in [t_i, s_i-1]) \\
                         &=1 - \sum_{i\geq 1} P(L_1\in  [s_i, t_{i+1}-1]) - P(L_1 < t_1) \nonumber \\
                         &\leq 1- \sum_{i\geq 1} P(L_1\in  [s_i, t_{i+1}-1]). \nonumber 
    \end{align}
    We will now show 
    \begin{equation}
      \label{eq:intervalcomp}
      P(L_1 \in [t_i, s_i-1]) \le 2 P(L_1 \in [s_i, t_{i+1}-1]),
    \end{equation}
    which together with \eqref{eq:det_proba} will allow to conclude.

    For this purpose, we first compare the number of elements in the blocks
    above by relating $t_{i+1}-s_i$ with $s_i-t_i$. We claim for $k,L$
    large enough that
    \begin{equation}
      \label{eq:stcomparison1}
      t_{i+i}-s_i \ge s_i-t_i.
    \end{equation}
    Indeed, for such $k$ we have $|D|=6\delta < \pi/2$. Without the loss of
    generality we assume $x \in [0,\pi]$.
    
    If $s_i-t_i=1$, then we immediately get $t_{i+i}-s_i \geq
      s_i-t_i$. Otherwise, there are at least two consecutive elements
      of the series in $D$ and the length $6\delta$ of $D$ is thus at least $(s_i-t_i-1)x$. Therefore we have
      \begin{equation}\label{eq:boundstin}
      x \le \frac{6\delta}{s_i-t_i-1}\le \frac{\pi}{2(s_i-t_i-1)}.
      \end{equation}
      A simple consequence
      is $x\le 6\delta \le \pi/2$. Also, by rearranging we get
      \begin{equation}
        \label{eq:st_bound1}
        x(s_i-t_i) \le \frac{\pi}{2} + x.
      \end{equation}
      For the elements outside $D$, observe that since $(s_i - 1)x$ is in $D$ and $t_{i+1}x$ is again in $D$, the $t_{i+1} - s_i$ intervals of length $x$ defined by $[(s_i-1) x, s_i x], [s_i x, (s_i +1)x ], \dots, [(t_{i+1}-1)x , t_{i+1}x]$ must cover at least the length of the complement of $D$, i.e. at least $2\pi - 6\delta$. We have thus
      $$t_{i+1}-s_i \geq \frac{2\pi-6\delta}{x}-1 \geq
      \frac{\frac{3}{2}\pi}{x}-1.$$
      Rearranging yields
      $$x(t_{i+1}-s_i) \ge \frac{3\pi}{2} - x.$$
      We compare this with \eqref{eq:st_bound1}, note that $0<x\le
      \pi/2$ and conclude again that $t_{i+i}-s_i \geq s_i-t_i$.

      We now show an upper bound on the block sizes. Remember
      that $x\in A$ defined in \eqref{eq:Adef} implies $x\geq
      \frac{\pi}{CL \log k}$. So using \eqref{eq:boundstin} and $\delta = \log^\beta k$, we obtain
    \begin{equation}
      \label{eq:stcomparison2}
      s_i-t_i \le \frac{6\delta}{x} + 1 \le \frac{6\delta}{\pi/CL\log k} + 1=
      \frac{6}{\log^\beta k} \cdot \frac{CL\log k}{\pi} + 1 \le  2CL
      \log^{1-\beta}k + 1.
    \end{equation}

    Let us come back to the probabilities of $L_1$ falling within the blocks defined.
    As $L_1$ is a geometric random variable, shifting the interval of
    interest by $s_i-t_i$ introduces only a simple multiplicative factor:
    $$ P(L_1\in [t_i,s_i-1]) = \left(1-\frac{1}{L}\right)^{t_i-s_i} P\left( L_1 \in \left[s_i,
        s_i+(s_i-t_i)-1\right]\right) \le \ldots $$
    We enlarge the target interval from length $s_i-t_i$ to
    $t_{i+1}-s_i$ relying on
    \eqref{eq:stcomparison1}. Clearly by this the probability cannot decrease.
    \begin{equation}
      \label{eq:blockineq1}
      \ldots \le \left(1-\frac{1}{L}\right)^{t_i-s_i} P\left( L_1 \in \left[s_i,
          t_{i+1}-1\right]\right).
    \end{equation}

  For the coefficient at the end of
  \eqref{eq:blockineq1} we use \eqref{eq:stcomparison2} to get for
  $k, L$ large enough
  $$\left(1-\frac{1}{L}\right)^{t_i-s_i} \le
  \left(1-\frac{1}{L}\right)^{-2CL\log^{1-\beta}k-1} \le
  \exp(3C\log^{1-\beta}k)\left(1+\frac{2}{L}\right)\le 2.$$
  During these estimates we used $(1-1/L)^{-2L} \le \exp(3)$ and $(1-\frac{1}{L})^{-1}\leq 1+ \frac{2}{L}$
  for $L$ large enough, and $\log^{1-\beta} k$ being as close to
  0 as needed for $k$ large enough.
  
  Substituting this last bound together into \eqref{eq:blockineq1}
  leads to \eqref{eq:intervalcomp}. This is enough to complete the proof as
  we have seen before.
\end{proof}

  To come back to the proof of Proposition \ref{prp:cosloss}, let us
  choose $D = [-3\delta, 3\delta]$. For small enough $\delta$,
  whenever we have $\{L_ix\}\notin D$, it implies $\cos^+(L_i x) <
  1 - 2\delta^2$. Assuming $\delta$ to be small enough is again
  equivalent to another (independent) bound for $k$ to be large enough. Using Lemma  \ref{lm:indprob} and knowing that the $L_i$
  are independent random variables we have
  \begin{equation}
    \label{eq:cossumsinglex}
    P\left(\sum_{i=1}^m \cos^+(L_i x) \ge m - 2\delta^2 \right)
    \le P\left(\{L_ix\}\in D\right)^m
    \le \left(\frac{2}{3}\right)^m,
  \end{equation}
  where $m = \log^\alpha k$ was defined in the statement of the proposition.
  This is the type of probability bound we are looking for, but only for a
  single $x$. Next we extend it to
  all $x\in A$ simultaneously, where $A$ is the interval of interest
  of arguments \eqref{eq:Adef}. As an intermediate step, break $A$
  into $\lceil A/\epsilon\rceil$ equal subintervals with $\epsilon =
  2\delta^2/(m CL \log k)$ and choose $x_j$ as the middle of each of these
  subintervals. In this setting, no point of $A$ is further than
  $\epsilon/2$ from some point $x_j$. Using the union bound for these
  chosen points we see
  \begin{equation*}
      P\left(\sup_j\sum_{i=1}^m \cos^+(L_i
        x_j) \ge m - 2\delta^2 \right) 
      \le \sum_{j} P\left(\sum_{i=1}^m \cos^+(L_i
        x_j) \ge m - 2\delta^2 \right)
\end{equation*}        
Taking into account the number of points $x_j$ in the grid and \eqref{eq:cossumsinglex}, we obtain
\begin{equation}
    \begin{split}\label{eq:probboundex}
      P\left(\sup_j\sum_{i=1}^m \cos^+(L_i x_j) \ge m - 2\delta^2 \right) 
      &\le \left\lceil \frac{2\pi m CL \log
      k}{2\delta^2}\right\rceil \left(\frac{2}{3}\right)^m\\
      \le \pi(\log^\alpha k+1) C L \log k \log^{2\beta}k
      \left(\frac{2}{3}\right)^{\log^\alpha k}
      &= C\pi L\log^{2\beta+\alpha+1} k \left(\frac{2}{3}\right)^{\log^\alpha k}(1+o(1)).
      \end{split}
    \end{equation}
  In this final term $(2/3)^{\log^\alpha k}$ decreases faster than
    the inverse of any polynomial in
  $k$ as $\alpha>1$. All others
  parts of the product have polynomial or lower rate in $k$.
  Consequently we see that the right hand
  side of \eqref{eq:probboundex} will become arbitrarily small as
  $k,L$ grows. In particular, it will go below $2/3$.

  At this point we have the probability estimate for grid points $x_j$. We
  need to extend this to the complete interval $A$, introduced in \eqref{eq:Adef}.
  We show that for $k,L$ large
  enough we have
  \begin{equation}
    \label{eq:gridextension}
    P\left(\sup_{x\in A} \sum_{i=1}^m \cos^+(L_i x) \ge m -
      \delta^2 \right) < P\left(\sup_j\sum_{i=1}^m \cos^+(L_i
      x_j) \ge m - 2\delta^2 \right).
  \end{equation}
  Indeed, for any $x\in A$ there is a grid point $x_j$ at most
  $\epsilon/2$ away. As the derivative of $\cos^+$ stays within
  $[-1,1]$, the change of the sum when moving to $x_j$ from $x$ is at
  most
  $$\sum_{i=1}^m \frac{\epsilon}{2} L_i \le m\frac{\delta^2}{m
    CL \log k} CL \log k = \delta^2.$$
  Therefore when the sum on the left hand side
  of \eqref{eq:gridextension} is at least $m-\delta^2$ for a certain
  $x\in A$ then there
  also must be a grid point $x_j$ for which the sum is at least
  $m-2\delta^2$. The inclusion of the events shows the inequality for
  the probabilities.
  \end{proof}

  \begin{proof}[Proof of Theorem \ref{thm:noroot}]
    Note that the right hand side of the claim can be slightly simplified. The relation
      between $L$ and $k$ ensures $k^{\rho_l} < L <
      k^{\rho_u}$ after a while, therefore the $L$ term on the right
      hand side can be replaced by a power of $k$. It is now sufficient to show
      that the probability in question is $O(k^{-c})$ for all $c>0$,
      we will thus consider only this case.

  Choose $C \ge c+1$. Let us assume $\max_i L_i < CL\log k$. This not
  being true is an exceptional event of probability $O(k^{-c})$ as shown
  in Lemma \ref{lm:lupperbound}.
  In order to exclude the roots from all $R_\gamma$, we split this
  region into three parts, and show that $z$ cannot be a solution of $q(z) = k$ in any of these parts.

  When $0<z<1$ is a positive real, it cannot be a solution according to
  Lemma \ref{lm:zreal}. When $z$ has a small argument, that is, $|\arg(z)| < \pi/(CL\log
  k)$, we refer to Lemma \ref{lm:qimag} to confirm $z$ cannot be a solution either.

  The remaining case is when $z$ has a large argument, that is,
  $\arg(z) \in A$.
  We aim to bound $\Re q(z)$. On one hand, we estimate
  the magnitude of the terms $z^{-L_i}$. Then we combine these with
  the cosines of
  the arguments to find their contributions to the real part of
  $q(z)$. Here we rely on Proposition \ref{prp:cosloss}, but let us
  make this precise.

  We need to check the magnitude of the terms $z^{-L_i}$. Knowing $|z| > 1-1/(L\log^\gamma k)$ and $L_i
  < CL\log k$ for $k,L$ large enough we have
  \begin{align*}
    |z|^{-L_i} &\le \left(1-\frac{1}{L\log^\gamma k} \right)^{-CL\log
      k} \le \left(1+\frac{2}{L\log^\gamma k} \right)^{CL\log
      k}\\
    &\le \exp(2C\log^{1-\gamma}k) \le 1 + \frac{4C}{\log^{\gamma-1}k}.
  \end{align*}
  Considering $\Re q(z)$, this gives
  \begin{equation}
    \label{eq:req_estimate1}
    \Re q(z) = \sum_{i=1}^k |z|^{-L_i}\cos(L_ix) \le \sum_{i=1}^k
    |z|^{-L_i}\cos^+(L_ix) \le
    \left(1+\frac{4C}{\log^{\gamma
          -1}k}\right) \sum_{i=1}^k \cos^+(L_ix).
  \end{equation}
  Note that the last inequality is the reason we have been working with $\cos^+$, as it would generally not hold true for $\cos$.
  Let us arrange the $k$ elements of this sum into groups of $m=\log^\alpha
  k$ arbitrary elements, consequently resulting in $k/m$ such
  groups. For a moment we assume $k$ is divisible by $m$.
  Let the sum of these groups be $S_1, S_2, \ldots, S_{k/m}$. According to
  Proposition \ref{prp:cosloss} we have
  $$P(S_i \ge m - \delta^2) \le \frac{2}{3},$$
  and each of these events are independent. Therefore the number of
  such events happening follows a $Binom(k/m,r)$ distribution for some
  $r\le 2/3$. From standard Chernoff bounds we see that
  $$P\left(Binom\left(\frac{k}{m},r\right) > \frac{3k}{4m}\right) \le 
  P\left(Binom\left(\frac{k}{m},\frac{2}{3}\right) >
    \frac{3k}{4m}\right) \le \exp\left(-\frac{k}{96m}\right) < k^{-c},$$
  for $k$ large enough. Consequently, at most $3k/(4m)$ of $S_i$ are
  larger than $m-\delta^2$. (with the exception of an event with probability of $O(k^{-c})$). In this case we have
  $$\sum_{i=1}^k \cos^+(L_i x) = \sum_{j=1}^{k/m} S_j \le \frac{3k}{4m}m
  + \frac{k}{4m}(m-\delta^2) = k - \frac{\delta^2k}{4m}.$$
  Plugging this back into \eqref{eq:req_estimate1} we arrive at
  \begin{equation}
    \label{eq:req_estimate2}
    \Re q(z) \le \left(1 + \frac{4C}{\log^{\gamma-1}k}\right) k
    \left(1 - \frac{\delta^2}{4m}\right) = k \left(1 + \frac{4C}{\log^{\gamma-1}k}\right)
    \left(1 - \frac{1}{4\log^{2\beta+\alpha}k}\right).
  \end{equation}
  Let us choose $\gamma > 2\beta + \alpha + 1$. With such a choice,
  the term $k$ above gets multiplied by a coefficient lower than 1
  for $k$ large enough. This shows $\Re q(z) < k$ which implies $z$ is
  not a solution of $q(z)=k$. This a.a.s.\ holds simultaneously for all $z\in R_\gamma,~\arg(z)\in A$.

  For the sake of completeness, if $k$ was not divisible by $m$,
    we could still form $\lfloor\frac{k}{m}\rfloor$ groups as before
    with $\bar{k} = m\lfloor\frac{k}{m}\rfloor$ elements, then collect
    the sum of the remaining $k-\bar{k}$ terms into
    $S_{\lfloor\frac{k}{m}\rfloor+1}$. Performing the same argument
    and using the trivial upper bound for $\cos^+$ when working with
    $S_{\lfloor\frac{k}{m}\rfloor+1}$ we get
    $$\Re q(z) \le \bar{k} \left(1 + \frac{4C}{\log^{\gamma-1}k}\right)
    \left(1 - \frac{1}{4\log^{2\beta+\alpha}k}\right) +
    (k-\bar{k})\left(1 + \frac{4C}{\log^{\gamma-1}k}\right).$$
    Knowing $k-\bar{k}<m$ the new additive term compared to
    \eqref{eq:req_estimate2} is (poly)logarithmic, which will not
    compensate for the almost linear $k/\log^\epsilon k$ type of loss
    originating from the first term. Hence once again, the right hand
    side will not reach $k$ and thus $z$ can not be a solution.

  Regarding the parameters, previously for Proposition \ref{prp:cosloss}
  we only needed to ensure $\alpha, \beta > 1$. Therefore we can apply
  Proposition \ref{prp:cosloss} and this reasoning for
  $\alpha=\beta=1+\epsilon,~\gamma=4+4\epsilon$ for any $\epsilon >
  0$, eventually allowing any $\gamma > 4$.
  
  Remember that during the proof, we had two small exceptional events, both having probability $O(k^{-c})$.
  This allows thus confirming the theorem with the condition on $\gamma$ and with the
  probability bound on the exceptional cases.

\end{proof}

Theorem \ref{thm:noroot} guarantees the absence of eigenvalues with
large absolute value (except at 1) with high probability. We can reformulate it in the following way
\begin{theorem}
  \label{cor:model1mixing}
  Assume $k,L \ra \infty$ while 
  $\rho_l < \log L/\log k < \rho_u$ for some constants
  $0<\rho_l<\rho_u<\infty$. Then for any $\gamma >4$ a.a.s.\ we have 
  the following bound on the mixing rate for $B(L,k)$:
  $$\lambda > \frac{1}{L \log^\gamma k}.$$
\end{theorem}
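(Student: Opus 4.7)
The plan is to combine three ingredients already in place: the eigenvalue-to-root correspondence of Proposition \ref{prp:polyrootequiv}, the root-exclusion estimate of Theorem \ref{thm:noroot}, and the definition of mixing rate. In this sense the result should be treated as an immediate corollary of Theorem \ref{thm:noroot}, which is in fact reflected by the label of the statement.

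First I would invoke Proposition \ref{prp:polyrootequiv} to identify the eigenvalues of the transition matrix $P$ of the pure drift Markov chain on $B(L,k)$ with the solutions $\mu \in \mC$ of $q(\mu) = k$. In particular, any eigenvalue $\mu \neq 1$ lying in the annulus $R_\gamma$ defined in \eqref{eq:Rgammadef} corresponds to a solution of $q(z) = k$ inside $R_\gamma$.

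Next I would apply Theorem \ref{thm:noroot}: under the standing assumptions $\rho_l < \log L/\log k < \rho_u$ and $\gamma > 4$, the probability that any such solution exists in $R_\gamma$ is $O(k^{-c} L^{-d})$ for every $c,d > 0$, and in particular tends to $0$ as $k, L \to \infty$. Combining these two steps shows that a.a.s.\ no eigenvalue of $P$ other than $1$ lies in $R_\gamma$, i.e., every such eigenvalue $\mu$ satisfies $|\mu| < 1 - 1/(L\log^\gamma k)$. Applying Definition \ref{def:mixingrate} then directly yields $\lambda > 1/(L\log^\gamma k)$ a.a.s.

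There is no real obstacle remaining, since the technical core has been handled in Theorem \ref{thm:noroot}. The only subtlety worth flagging is that $R_\gamma$ is defined so as to include the entire closed unit disk boundary except the point $1$; thus excluding eigenvalues from $R_\gamma$ simultaneously rules out any spurious unimodular eigenvalues distinct from $1$, which is precisely what is needed to guarantee that $\lambda$ is strictly positive in the sense of Definition \ref{def:mixingrate}.
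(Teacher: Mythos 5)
Your proposal is correct and follows essentially the same route as the paper: the paper's own proof is a one-liner citing Theorem \ref{thm:noroot} together with the definitions of $R_\gamma$ and $\lambda$, and you simply spell out the intermediate step via Proposition \ref{prp:polyrootequiv} (which Theorem \ref{thm:noroot} already relies on). The only point left implicit in both your argument and the paper's is that $P$ is (row-)stochastic, so all eigenvalues satisfy $|\mu|\le 1$; thus excluding $R_\gamma$ indeed forces $|\mu|<1-1/(L\log^\gamma k)$ for every eigenvalue $\mu\neq 1$.
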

\begin{proof}
This lower bound is a direct consequence of Theorem \ref{thm:noroot} and the
definitions of $R_\gamma$ and the mixing rate $\lambda$. 
\end{proof}

\section{Mixing rates for $B_n(k)$}
\label{sec:mainmodel}

We now translate Theorem \ref{cor:model1mixing} to our initial graph model $B_n(k)$, where
the total number of nodes are fixed beforehand.  For this purpose, we first show that for any $L\in\mR^+$ the conditional distribution of
$B(L,k)$ conditioned on having $n$ nodes in total is the same as
$B_n(k)$ in the following sense. We will use the compact notation
$B(L,k)|_n$ for the aforementioned conditional distribution.

We need to map a graph from $B(L,k)|_n$ to the cycle. Given such a
graph $G$ (as we build it in Definition \ref{def:graphmodel_indeparc}) and
$s\in \{1,2,\ldots n\} = [n]$ define $T(G,s)$ as follows. Map
$a_1$ to node $s$, then progressing along the arc of $L_1$ to $s+1,s+2,\ldots$,
continuing with $L_2,L_3,\ldots$ taking the numbers $s+i$ modulo $n$ once
needed. Map the edges of $G$ consistently with the nodes, then $T(G,n)$ is a graph on the labeled nodes $[n]$.

\begin{proposition}
  \label{prp:model_equiv}
  Given are $n,k\in\mZ^+, ~L\in\mR^+$. Let $U(n)$ denote the uniform
  distribution on $[n]$ and $T^*(\cdot,\cdot)$ the
  induced measure transformation of $T(\cdot,\cdot)$ defined above.

  Then $T^*(B(L,k)|_n, U(n)) = B_n(k)$. Simply speaking, if we
  randomly choose the starting point where we map $B(L,k)|_n$ to the
  cycle, we get the distribution $B_n(k)$.
\end{proposition}

Note that the randomization in the mapping does not change the internal
structure of the graph, therefore once the proposition is proven, we
also immediately get the following:

\begin{corollary}\label{cor:cond_equal}
  Given $n,k\in\mZ^+, ~L\in\mR^+$ consider the conditional
  distribution $B(L,k)|_n$.
  Then the corresponding distribution of the mixing rate $\lambda$ is the same as
  the distribution of $\lambda$ for $B_n(k)$.
\end{corollary}

\begin{proof}[Proof of Proposition \ref{prp:model_equiv}]
  For any $l_1\ge 1$ we have $P(L_1=l_1) = (1-p)^{l_1-1}p$ and the arc
  of $a_1\cdots b_1$ will consist of $l_1$ nodes, (with $p=\frac{1}{L}$). Consequently, in
  the setup of $\{L_1=l_1,L_2=l_2,\ldots,L_k=l_k\}$ we get a total of
  $n$ nodes exactly if $\sum_{i=1}^k l_i = n$. The probability of
  such an instance is
  $$
  P(L_1=l_1,L_2=l_2,\ldots,L_k=l_k) = \prod_{i=1}^k(1-p)^{l_i-1}p
  = (1-p)^{\sum_{i=1}^k l_i}p^k = (1-p)^{n-k} p^k.
  $$
  This probability is independent of the choice of $\{L_i\}_{i=1}^k$,
  therefore the conditional distribution is uniform on all the
  possibilities.

  A uniform number on $[n]$ is supplemented and we need to
  relate this joint variable to the distribution of $B_n(k)$.
  It is straightforward to see that $T$ will map any
  element in the support of $B(L,k)|_n \times [n]$ to an element of
  the support of $B_n(k)$ as they are built the same way.

  Moreover, this will be a homogeneous map in the sense that each element of
  the support of $B_n(k)$ will be obtained exactly $k$ times. To see this, start
  from any such element, we will get the $k$ different preimages
  depending on which arc we choose to be $L_1$. (Note that two subsequent
  deleted edges might lead to an ``arc'' without edges, of size
    1 in our current notation.)

  In the end, both the conditional distribution $B(L,k)|_n$ and
  $U(n)$ were uniform, applying the map $T$ that is a uniform
  $k$-fold cover on a target space will result in a uniform
  distribution on its range, confirming the claim of the proposition.
\end{proof}

Our last ingredient is a lower bound on the probability for a random graph model $B(n/k,k)$ to have exactly $n$ nodes.

\begin{lemma}\label{lem:1/n}
The probability for a random graph $B(n/k,k)$ to have exactly $n$ nodes is at least $\frac{1}{n}$.
\end{lemma}
\begin{proof}
We have seen in the proof of Proposition \ref{prp:model_equiv} that all instances of $n$ nodes have the same probability
$\left(1-\frac{k}{n}\right)^{n-k}
\left(\frac{k}{n}\right)^k$. Moreover, standard combinatorial
arguments show that there are ${n-1 \choose k-1}$ ways of
  distributing the $n$ nodes into the $k$ arcs, leading to the
following  probability of obtaining exactly $n$ nodes, and event that
we denote by $M(n,k)$
\begin{equation}
    \label{eq:probM}
    P(M(n,k)) = 
{n-1 \choose k-1}\left( \frac{k}{n} \right)^k \left(1
      - \frac{k}{n}\right)^{n-k} = {n-1 \choose k-1}\frac{k^k
      (n-k)^{n-k}}{n^n}.  
  \end{equation}
  We develop a simple asymptotic estimate for this probability. From the
  Stirling formula we know that
  $$\lim_{n\ra \infty} \frac{n!}{\sqrt{2\pi
      n}\left(\frac{n}{e}\right)^n} = 1.$$
  For conciseness, we will use the $\approx$ relation if the ratio of
  the two quantities is 1 in the limit. In this spirit we get
  $${n \choose k} \approx \frac{\sqrt{2\pi
      n}\left(\frac{n}{e}\right)^n}
  {\sqrt{2\pi
      k}\left(\frac{k}{e}\right)^k
    \sqrt{2\pi
  (n-k)}\left(\frac{n-k}{e}\right)^{n-k}} = 
\sqrt{\frac{n}{2\pi k(n-k)}} \frac{n^n}{k^k (n-k)^{n-k}}.$$
Let us plug this back to \eqref{eq:probM}, while noting ${n-1 \choose
  k-1} = \frac{k}{n}{n \choose k}$.
  \begin{equation*}
    P(M(n,k)) \approx \frac{k}{n}\sqrt{\frac{n}{2\pi k(n-k)}} = \sqrt{\frac{k}{2\pi n(n-k)}}.
  \end{equation*}
  As a very crude bound we get for $n,k$ large enough that 
  \begin{equation}
    \label{eq:mbound}
    P(M(n,k)) > \frac{1}{n}.
  \end{equation}
\end{proof}

We can now extend Theorem \ref{cor:model1mixing} to our initial model $B_n(k)$.

\begin{theorem}
  \label{thm:model2mixing}
  Assume $n,k \ra \infty$ while $\rho_l < \log n/\log k < \rho_u$ for some
  constants $1<\rho_l<\rho_u<\infty$. Then for any $\gamma >4$ a.a.s.\ we have
  the following bound on the mixing rate for $B_n(k)$:
  $$\lambda > \frac{k}{n \log^\gamma k}.$$
\end{theorem}
\begin{proof}
We know from Theorem \ref{thm:noroot} that the probability of the polynomial $q(z)$ having a root in the forbidden ring (and thus of the mixing rate being smaller than $\frac{1}{L \log^\gamma k}$) bounded as $O(L^{-2} k^{-2}) =
  O(n^{-2})$. Hence it remains negligible with respect to the probability of $B(n/k,k)$ having exactly $n$ nodes (for $n,k$ large enough) which we have shown in Lemma \ref{lem:1/n} to be at least $1/n$. We deduce that the mixing rate of pure drift Markov chains for the conditional random graph model $B(n/k,k)|_{n}$ is also at least  
$\frac{1}{L \log^\gamma k}$ a.a.s. Proposition \ref{prp:model_equiv} allows then concluding that the same holds for $B_n(k)$, which concludes the proof.
\end{proof}

\section{Simulations}
\label{sec:simulations}

Following the asymptotic theoretical results we perform complementing simulations
to analyze the tightness of the bounds obtained. We also explore
further using numerical tools
for the next step of research that is not yet treated
analytically.

The mixing results we have are exciting as we see a strong speedup
compared to the similar reversible Markov chain 
with transition matrix $\tilde{P} = (P+P^\top)/2$.
By this we set all
transition probabilities on all edges to be equal in the two directions.
For this Markov chain, if the initial
distribution is concentrated in the middle of the longest arc, the
Central Limit Theorem ensures that even
after $\Omega(L^2\log^2 k)$ steps the probability of not leaving the
arc is bounded away from 0. Consequently we get a lower bound of the
same order for the mixing time and which in turn translates to the mixing rate bound
\begin{equation}
  \label{eq:reversiblerate}
  \lambda < \frac{C}{L^2\log^2 k},  
\end{equation}
which is a square factor worse than our new results for the
non-reversible Markov chain.

Simulations are in line with the speedup we see when comparing
\eqref{eq:reversiblerate} with Theorem \ref{cor:model1mixing}. Figure
\ref{fig:eig_comparison} is a log-log histogram showing the decrease of
$\lambda$ as the node count $n$ increases. The histogram presents the
simulation results for the non-reversible and reversible Markov chain
and we do observe the strong separation predicted by the theoretical
results. The stripe on the top presents $\lambda$ for the
non-reversible Markov chains while the bottom one corresponds to the
reversible ones.
\begin{figure}[h]
  \centering
  \resizebox{0.6\textwidth}{!}{\input{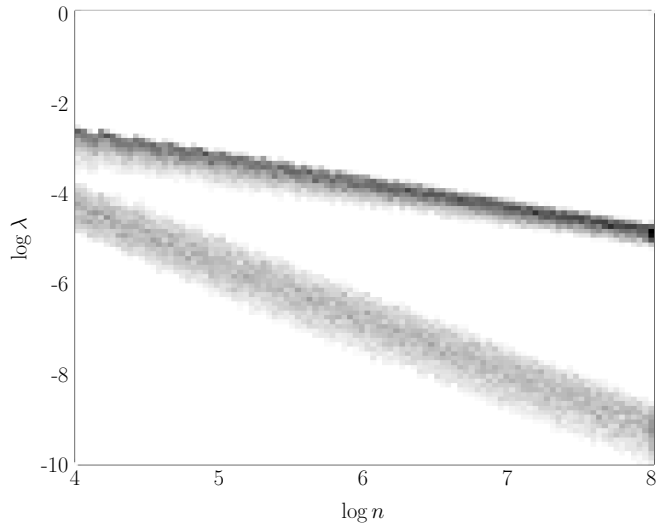}}
  \caption{Histograms for the mixing rates $\lambda$ for the Markov
    chains on the graphs
    $B_n(\lfloor \sqrt{n}\rfloor)$. The upper stripe corresponds to
    non-reversible Markov chains while the lower one to the reversible variants.}
  \label{fig:eig_comparison}
\end{figure}
Figure \ref{fig:eig_comparison} is based on 200.000 random Markov chains
with $n$ ranging from 54 to 2980 and with $k=\lfloor
\sqrt{n} \rfloor$. As we are interested in typical
behavior of these randomized Markov chains, we discarded the top and
bottom $5\%$ of the results for every $n$ considered.

The two type of Markov chains we compared can be seen as the extremal setups:
either the asymmetry is so strong that steps are deterministic along
the cycle, or we have perfect symmetry.
There is however a full spectrum of intermediate situations, and one may wonder which level of asymmetry is optimal.
We have seen that full
asymmetry is better in terms of mixing performance than full symmetry.
On Figure \ref{fig:probchange_comparison}, starting from the
reversible Markov chain, we gradually change the transition
probabilities along the cycle until we reach the current extreme
asymmetric case.
Specifically, for $1/2 \le q \le 1$ we set the transition matrix
$P_q = 
qP+(1-q)P^\top$ and compute the mixing rate of the resulting Markov
chain. Here we have $P_{1/2} = \tilde{P},~P_{1} = P$ as expected.
We perform simulations for $B_{500}(10)$ and $B_{500}(50)$.
In both case, 8000 random graphs were generated and the mixing rates were
computed for all graphs and for $q$ moving along $[1/2,1]$.
Again, the top and bottom
$5\%$ were discarded. The means of the resulting mixing rates are presented in Figure
\ref{fig:probchange_comparison} together with the sample standard
deviations. The figures show that the optimal choice is
near the extremal non-reversible case, confirming our concept.
Still, interestingly a minor offset
towards the reversible version still increases the mixing
rate. Intuitively the two effects of the modification match well: the
small loss in the speed of moving along the
cycle is well compensated by the local mixing introduced.
\begin{figure}[h]
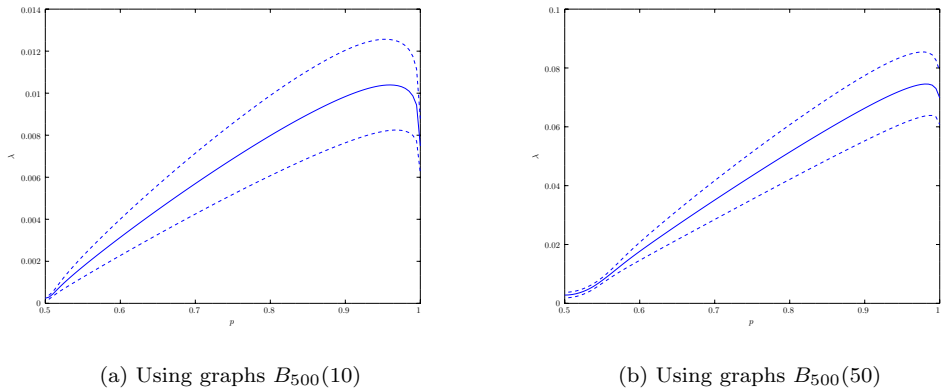

  \centering
  \subfloat[Using graphs $B_{500}(10)$]{
    \resizebox{0.4\textwidth}{!}{\input{cycle_probchange_summary_500_10}}
  }
  \subfloat[Using graphs $B_{500}(50)$]{
    \resizebox{0.4\textwidth}{!}{\input{cycle_probchange_summary_500_50}}
  }
  \caption{Mixing rates of the transition matrices $P_q = 
qP+(1-q)P^\top$ for the interpolation between the pure drift ($q=1$)
Markov chain and its reversible version ($q=0.5$). Solid lines
follow the means while dashed lines indicate the sample standard deviations
around the means.}
  \label{fig:probchange_comparison}
\end{figure}

The analytic treatment of the intermediate Markov chains for $q\neq
1/2,1$ brings new challenges as the non-reversible feature is still
present while we lose the deterministic nature of the movement along
the cycle.

\section{Conclusions}
\label{sec:conclusions}

We have seen in Theorem
\ref{cor:model1mixing} and Theorem \ref{thm:model2mixing} that for both models $B_n(k)$ and $B(L,k)$ the
mixing rate of the non-reversible Markov chain
considered is much higher than the similar reversible one.
The results confirm that the simultaneous application of adding long
distance edges and also setting the
Markov chain to be non-reversible dramatically improves the mixing rate.
We believe this phenomenon is promising and could provide similar
speedup effects for other reference graphs, other methods to add random
edges and other means of introducing non-reversibility.

We have also seen numerically in Figure \ref{fig:probchange_comparison}
that being fully
non-reversible is not necessarily optimal in this context, even though
it is significantly better than being fully reversible.

Therefore one of the open questions is to find the optimal
Markov chain among the intermediate cases. Another goal for future
research is to consider the more realistic situation where the hubs do
not have such high number of connection,
for instance, by replacing the complete graph on the selected $cn^{\sigma}$ nodes with
a random matching on them. Simulations similar to Figure
\ref{fig:eig_comparison} in \cite{gb:ringmixing2014} suggest that a
similar speedup is to be expected.

\bibliographystyle{siam}
\bibliography{ringmixing,swn}

\end{document}